\newcolumntype{C}[1]{>{\centering\let\newline\\\arraybackslash\hspace{0pt}}m{#1}}
\numberwithin{equation}{section}
\theoremstyle{plain} 
\newtheorem{theorem}{Theorem}[section]
\newtheorem{corollary}[theorem]{Corollary}
\newtheorem{lemma}[theorem]{Lemma}
\newtheorem{remark}[theorem]{Remark}
\begin{document}

\title{Rings as unions of proper ideals}

\author[Malcolm H. W. Chen]{Malcolm Hoong Wai Chen} \address{Department of Mathematics, University of Manchester, Oxford Road, Manchester M13 9PL, United Kingdom} \email{malcolmhoongwai.chen@postgrad.manchester.ac.uk}

\subjclass{16P10, 16Z05, 05E16.}
\keywords{rings without identity; covering numbers; subring cover; ideal cover}

 \begin{abstract}
In this note, we define and investigate \emph{ideal covering numbers} of associative rings (not assumed to be commutative or unital): three invariants defined as the minimal number of proper left, right, or two-sided ideals whose union equals the ring. For every prime $p$, we construct four infinite families of rings without identity that attain the sharp lower bound $p + 1$ for ideal covering numbers, each exhibiting distinct behavior with respect to left, right, and two-sided ideal coverings. As a consequence of a result by Lucchini and Mar\'{o}ti, we also characterize all rings with ideal covering numbers three. Finally, we make several observations and propose open questions related to these invariants and the structure of rings admitting such ideal coverings.
\end{abstract}

\maketitle

\section{Introduction} \label{sec1}

It is a standard undergraduate exercise to show that a group is never the union of two of its proper subgroups, and a classical result due to Scorza \cite{scorza} shows that a group is the union of three proper subgroups if and only if it has a quotient isomorphic to $C_2 \times C_2$, the direct product of two cyclic groups of order two. Later, Cohn \cite{groupclassic} and Tomkinson \cite{groupclassic2} began to systematically study groups which are unions of $n > 3$ proper subgroups for small values of $n$.

This motivated inquiry into a broader question: When is an algebraic structure $\mathcal{S}$ the union of $n$ of its proper substructures? We say that $\mathcal{S}$ is \emph{coverable} if such a union exists, and define the \emph{covering number} $\sigma(\mathcal{S})$ as the minimal of such $n$; we write $\sigma(\mathcal{S})=\infty$ if no such cover exists. Similar covering problems have also been studied for rings \cite{ringrecent}, vector spaces \cite{vectorspace}, modules \cite{module}, loops \cite{loop}, and semigroups \cite{semigroup}. We refer the reader to the survey article \cite{survey} for more details.

The covering numbers of groups have been studied extensively, though many questions remain open (see for example, \cite{grouprecent}). However, if one restricts to coverings by proper \emph{normal} subgroups, then there is an elegant result \cite{normal}: A group is the union of proper normal subgroups if and only if it has a quotient isomorphic to $C_p \times C_p$ for some prime $p$; the covering number is $p+1$ for smallest such $p$. 

In this note, we study the ring-theoretic analogue of these covering problems. Throughout, a \emph{ring} is a set $R$ equipped with binary operations $+$ and $\cdot$ such that $R^+:=(R,+)$ is an Abelian group, multiplication is associative, and both distributive laws hold. Rings are not assumed to be commutative or have a multiplicative identity. A \emph{subring} is an additive subgroup of $R$ closed under multiplication (but need not to contain identity). A \emph{left} (resp. \emph{right}) \emph{ideal} is an additive subgroup closed under left (resp. right) multiplication by elements of $R$; a \emph{two-sided ideal} is both a left and right ideal. In particular, every ideal is a subring. We adopt this convention to be consistent with the literature on ring covering problems.


The purpose of this paper is to initiate a study on covering rings by proper ideals, in direct analogy to covering groups by proper normal subgroups. We define the \emph{ideal covering numbers} of a ring $R$ as three invariants: the minimal number of proper left, right, or two-sided ideals whose union equals $R$. These are denoted as $\eta_\ell(R)$, $\eta_r(R)$, and $\eta(R)$, respectively. If no such cover of proper left, right, or two-sided ideal exists, we define the corresponding ideal covering number to be $\infty$. 

Note that each two-sided ideal cover is also both a left and right ideal cover, both of which is in turn also a subring cover, which by only considering the additive structure is a subgroup cover of $R^+$. Thus, (possibly allowing for infinity) we have the chain of inequalities
\begin{equation} \label{ineq}
    \sigma(R^+) \leq \sigma(R) \leq \eta_\ell(R),\eta_r(R) \leq \eta(R).
\end{equation} 

Our main result is as follows. 

\begin{theorem} \label{main}
For every prime $p$, there exist rings $R_1,R_2,R_3,R_4$ such that
\begin{itemize}
    \item $\eta_\ell(R_1)=\eta_r(R_1)=\eta(R_1)=p+1$,
    \item $\eta_\ell(R_2)=p+1$, whereas $\eta_r(R_2)=\eta(R_2)=\infty$,
    \item $\eta_r(R_3)=p+1$, whereas $\eta_\ell(R_3)=\eta(R_3)=\infty$,
    \item $\eta_\ell(R_4)=\eta_r(R_4)=p+1$, whereas $\eta(R_4)=\infty$.
\end{itemize}
\end{theorem}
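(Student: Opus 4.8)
The plan is to construct four explicit rings, one for each bullet, and compute their left, right, and two-sided ideal covering numbers directly. The guiding principle is the group-theoretic fact quoted in the introduction: a group covered by proper normal subgroups has covering number $p+1$ precisely when it has a $C_p \times C_p$ quotient. Since a two-sided ideal is in particular a normal (indeed central, as $R^+$ is abelian) additive subgroup, the inequality chain \eqref{ineq} forces $\eta_\ell, \eta_r, \eta \geq \sigma(R^+) \geq p+1$ whenever $R^+$ has a $C_p \times C_p$ quotient; so the lower bound $p+1$ will come essentially for free once the additive group is chosen to be (a quotient mapping onto) $C_p \times C_p$. The real content is engineering the \emph{multiplication} so that the one-sided and two-sided ideal lattices behave asymmetrically.

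\medskip

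First I would build $R_1$ as the ``symmetric'' example. The natural candidate is a ring whose additive group is $C_p \times C_p$ (or $\mathbb{F}_p^2$) with a multiplication making the $p+1$ additive subgroups of order $p$ all into two-sided ideals; the cleanest choice is the zero ring (trivial multiplication) on $\mathbb{F}_p^2$, where every additive subgroup is a two-sided ideal, so every subgroup cover is an ideal cover and $\eta_\ell = \eta_r = \eta = \sigma(R^+) = p+1$. Next, for $R_2$ I want left ideals to cover but right ideals (hence two-sided) to fail entirely. The standard device is a ring where left multiplication is rich but right multiplication collapses: take a module-like construction, e.g.\ $2\times 2$ upper-triangular-type matrices over $\mathbb{F}_p$ with only the first column nonzero, or more transparently a ring $R$ with $R^2 = R$ arranged so that the only right ideals are $0$ and $R$ (so no proper right-ideal cover exists, giving $\eta_r = \infty$), while the left-ideal lattice maps onto the subgroup lattice of a $C_p \times C_p$ quotient. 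The ring $R_3$ is then the opposite ring $R_2^{\mathrm{op}}$, and its covering numbers are obtained by symmetry, swapping the roles of left and right. Finally, $R_4$ must have both $\eta_\ell = \eta_r = p+1$ yet $\eta = \infty$: here I would look for a ring with an abundance of proper left ideals and proper right ideals (each family covering $R$), but whose only two-sided ideals are $0$ and $R$, so no proper two-sided cover exists; a simple ring with zero or degenerate multiplication on top of an $\mathbb{F}_p^2$ additive structure, or a suitable matrix ring without identity, is the kind of object to target.

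\medskip

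The key steps, in order, are: (i) fix each additive group so that the lower bound $p+1$ holds via \eqref{ineq}; (ii) for each ring exhibit an explicit cover by $p+1$ ideals of the relevant type, establishing the upper bound; (iii) classify the left, right, and two-sided ideals of each ring well enough to prove sharpness of the bound and, crucially, the $\infty$ claims---i.e.\ show that the ring has \emph{no} proper cover of the relevant type by arguing that its proper ideals of that type are too few or that their union omits some element (for instance, if the only proper two-sided ideals form a chain or share a common element, no finite union can be the whole ring). I would verify the $\infty$ statements by showing the relevant ideal lattice has a unique maximal element or is totally ordered, since a ring is never the union of ideals contained in a single proper ideal.

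\medskip

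The main obstacle I anticipate is constructing $R_4$: simultaneously forcing the left- and right-ideal lattices to be rich enough to admit $(p+1)$-element covers while collapsing the two-sided ideal lattice to just $\{0, R\}$ requires a genuinely noncommutative, non-unital multiplication that is finely tuned. For $R_2$ and $R_3$ the asymmetry is one-directional and the opposite-ring trick halves the work, but $R_4$ demands that neither side degenerate while their ``intersection'' (the two-sided structure) does. I expect the verification that $R$ has no proper two-sided ideal other than $0$ and $R$---a simplicity-type statement---combined with the explicit one-sided covers to be the technical heart of the argument; the lower bounds and the symmetric case $R_1$ should be routine given the quoted normal-subgroup covering result.
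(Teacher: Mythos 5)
Your plan for $R_1$, $R_2$, $R_3$ is essentially the paper's own construction: the zero ring on $\mathbb{F}_p^2$ for $R_1$, and for the asymmetric pair a ring of order $p^2$ whose multiplication is governed by a nonzero functional (your ``first column'' matrix ring is exactly of this type, with $xy=\lambda(y)x$; the paper's $R_2$, $R_3$ are presentations of the same two rings), together with the opposite-ring observation. One correction there: you cannot arrange that ``the only right ideals are $0$ and $R$'' in such a ring --- the kernel of the functional is always a proper nonzero two-sided ideal. What is true, and what suffices, is that this kernel is the \emph{unique maximal} proper right ideal, so the union of all proper right ideals misses everything outside a single line and $\eta_r=\infty$; your chain/unique-maximal criterion does the job once the ideal lattice is computed correctly.

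The genuine gap is $R_4$, which you yourself flag as the technical heart. Both objects you propose to target provably do not exist. First, no ring with additive group $\mathbb{F}_p^2$ can satisfy the fourth bullet: the $p+1$ lines form the unique minimal subgroup cover of $C_p\times C_p$, so left (resp.\ right) ideal coverability forces \emph{every} line to be a left (resp.\ right) ideal (this is the paper's Lemma \ref{lem}); if both hold, every line is two-sided and $\eta(R)=p+1$, not $\infty$. Second, a ring ``whose only two-sided ideals are $0$ and $R$'' can never have finite $\eta_\ell$ or $\eta_r$: a finite simple ring either has $R^2=0$, in which case every subgroup is an ideal and simplicity forces $R$ to be the zero ring on $C_p$ (not coverable at all), or $R^2=R$, in which case $R$ has no nonzero nilpotent ideals and Wedderburn--Artin theory (valid for finite rings without assumed identity) gives $R\cong\mathcal{M}_n(\mathbb{F}_q)$, a unital ring --- and any cover by one-sided ideals would place the identity inside one of them; infinite simple rings are excluded as well, since by the Neumann--Lewin reduction in Remark \ref{rem} a finite one-sided ideal cover produces a proper two-sided ideal of finite index. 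So ``simple with rich one-sided ideal lattices'' is self-contradictory. The paper's $R_4$ is neither simple nor of order $p^2$: it has order $p^3$, it \emph{does} have proper nonzero two-sided ideals $J_\ell=\langle b\rangle$ and $J_r=\langle c\rangle$ --- these are exactly what give the upper bounds, since $R_4/J_\ell\cong R_2$, $R_4/J_r\cong R_3$, and covers pull back along quotients --- and $\eta(R_4)=\infty$ holds not by simplicity but because every proper two-sided ideal avoids the element $a$ (if $a\in J$ then $b=ba\in J$ and $c=ac\in J$, forcing $J=R_4$). Without an idea of this shape --- go above order $p^2$, keep the ring non-simple, and make one element generate $R$ as a two-sided ideal while the one-sided structure survives in quotients --- your fourth bullet cannot be completed.
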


Also as an immediate consequence of Lucchini and Mar\'{o}ti's classification of rings which are the union of three proper subrings \cite{ringclassic}, we obtain the following.

\begin{corollary} \label{cor}
    Let $\mathcal{M}_2(\mathbb{Z}_m)$ be the ring of $2 \times 2$ matrices over the ring of integers modulo $m$. Let $X$ be the subring $$\left\{\begin{pmatrix} 0 & 0 \\ 0 & 0 \end{pmatrix}, \begin{pmatrix} 2 & 0 \\ 0 & 0 \end{pmatrix}, \begin{pmatrix} 0 & 0 \\ 0 & 2 \end{pmatrix}, \begin{pmatrix} 2 & 0 \\ 0 & 2 \end{pmatrix}\right\}$$ of $\mathcal{M}_2(\mathbb{Z}_4)$ (note that the multiplication of any two elements is zero). Also let $Y$ and $Z$ be the subrings of $\mathcal{M}_2(\mathbb{Z}_2)$ defined by
    \begin{align*}
    Y&=\left\{\begin{pmatrix} 0 & 0 \\ 0 & 0 \end{pmatrix}, \begin{pmatrix} 1 & 0 \\ 0 & 0 \end{pmatrix}, \begin{pmatrix} 0 & 1 \\ 0 & 0 \end{pmatrix}, \begin{pmatrix} 1 & 1 \\ 0 & 0 \end{pmatrix}\right\}, \\
        Z&=\left\{\begin{pmatrix} 0 & 0 \\ 0 & 0 \end{pmatrix}, \begin{pmatrix} 0 & 1 \\ 0 & 0 \end{pmatrix}, \begin{pmatrix} 0 & 0 \\ 0 & 1 \end{pmatrix}, \begin{pmatrix} 0 & 1 \\ 0 & 1 \end{pmatrix}\right\}.
    \end{align*}
    Then for any ring $R$, we have
    \begin{itemize}
        \item $\eta(R)=3$ if and only if $R$ has a factor ring isomorphic to $X$.
        \item $\eta_\ell(R)=3$ if and only if $R$ has a factor ring isomorphic to either $X$ or $Y$.
        \item $\eta_r(R)=3$ if and only if $R$ has a factor ring isomorphic to either $X$ or $Z$.
    \end{itemize}
\end{corollary}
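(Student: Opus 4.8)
The plan is to combine three ingredients: the explicit ideal covering numbers of the three small rings $X$, $Y$, $Z$; a pullback lemma making each invariant monotone under passage to factor rings; and the additive structure theorem for coverings of a group by three proper subgroups, applied to the given ideal cover. First I would record the elementary lower bound: any cover by proper left (right, two-sided) ideals is in particular a cover of the additive group $R^+$ by proper subgroups, and no group is a union of two proper subgroups, so each of $\eta_\ell,\eta_r,\eta$ is either $\ge 3$ or $\infty$. Next I would verify the pullback lemma: for a two-sided ideal $T\trianglelefteq R$, preimages under $R\twoheadrightarrow R/T$ convert any cover of $R/T$ by $n$ proper left (right, two-sided) ideals into such a cover of $R$, whence $\eta_\bullet(R)\le\eta_\bullet(R/T)$. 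A direct computation then gives $\eta_\ell(X)=\eta_r(X)=\eta(X)=3$ (in the zero ring $X$ every subgroup is a two-sided ideal, and the three order-two subgroups of $C_2\times C_2$ cover it), while in $Y$ exactly the three order-two subgroups are left ideals but only one is a right or two-sided ideal, giving $\eta_\ell(Y)=3$ and $\eta_r(Y)=\eta(Y)=\infty$; the ring $Z$ is handled by transposing. Together with the lower bound, these computations and the pullback lemma yield all the ``if'' directions simultaneously.

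For the ``only if'' directions I would run one template for all three invariants. Assuming $R$ is the union of three proper ideals of the relevant type, minimality forces the cover to be irredundant, so by the structure theorem for three-fold subgroup covers the common intersection $D$ has index four with $R^+/D^+\cong C_2\times C_2$, each ideal has index two, and the three ideals descend to the three order-two subgroups of $R/D$. The two-sided case is then immediate: $D=\bigcap I_i$ is an intersection of two-sided ideals, hence two-sided, so $R/D$ is a genuine factor ring of order four all of whose order-two subgroups are two-sided ideals, and a short bilinearity computation (each of $ab,ba,a^2,b^2$ is forced into the intersection of two distinct lines, hence into $0$) shows $R/D\cong X$.

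For the left case I would first show that left multiplication by any element acts on the module $M=R/D$ as either $0$ or the identity, since an $\mathbb{F}_2$-endomorphism of $C_2\times C_2$ fixing every line is a scalar; then I would classify the four-element rings in which all three order-two subgroups are left ideals, showing they are exactly $X$ and $Y$ according as the annihilator $Z_\ell=\{r:rR=0\}$ has order four or order two (order one is excluded by additivity of left multiplication). The right case is the transpose, producing $X$ or $Z$.

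The main obstacle is that in the one-sided cases $D=\bigcap J_i$ need \emph{not} be two-sided, so $R/D$ is only a left module, not a factor ring, and I must instead manufacture a genuine two-sided ideal $T$ with $R/T\cong X$ or $Y$. I would resolve this by a case split on the two-sided annihilator $A=\{r:rR\subseteq D\}=\operatorname{Ann}_R(M)$. If $R^2\subseteq D$ (equivalently $A=R$), then every subgroup containing $R^2$ is automatically two-sided, so $D$ itself is two-sided and $R/D\cong X$. Otherwise $A$ has index two and some element acts as a one-sided identity on $M$; here, after quotienting by the two-sided ideal $A\cap D$, I expect the delicate step to be locating an index-four two-sided ideal $T\subseteq A$ to which this one-sided identity descends, yielding $R/T\cong Y$. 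This production of $T$, rather than any single computation, is where the real work lies, while the classification of four-element rings above guarantees that $X$ and $Y$ (respectively $X$ and $Z$) are the only isomorphism types that can arise once such a $T$ is in hand.
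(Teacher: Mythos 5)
Your overall route is genuinely different from the paper's, and much of it is correct: the lower bound, the pullback lemma, the computation of the covering numbers of $X$, $Y$, $Z$, and the entire two-sided ``only if'' argument (each of $a^2,b^2,ab,ba$ lands in two distinct lines of $R/D$, hence vanishes, so $R/D\cong X$) all check out. The paper, by contrast, does essentially no structural work in the ``only if'' direction: since every ideal cover is in particular a subring cover, $\eta_*(R)=3$ forces $\sigma(R)=3$, and the paper then quotes Lucchini and Mar\'oti's classification \cite{ringclassic}, which already supplies a genuine factor ring of $R$ isomorphic to one of five explicit rings; the two unital ones are discarded and the remaining three are identified with $R_1,R_2,R_3$ at $p=2$. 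The point of invoking that theorem is precisely to bypass the difficulty you isolate, namely that the intersection $D$ of three proper \emph{one-sided} ideals need not be two-sided, so $R/D$ need not be a factor ring.

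That difficulty is where your proposal has a genuine gap. In the one-sided cases with $A=\operatorname{Ann}_R(R/D)$ of index two, you say you ``expect'' to locate an index-four two-sided ideal $T\subseteq A$ with $R/T\cong Y$, and you correctly flag this as ``where the real work lies'' --- but the proposal never constructs $T$, so the second and third bullets of the corollary are not actually proved. The gap is fillable along exactly the lines you set up: $A\cap D$ is indeed two-sided, and after quotienting by it one finds $|D|\le 2$; if $D=0$ then $|R|=4$ and your classification of order-four rings applies, while if $|D|=2$ then $R=A\oplus D$ additively, and associativity forces $AR=0$, $d^2=d$, and $d\alpha=\alpha$ for the nonzero $d\in D$ and all $\alpha\in A$, i.e., $xy=y$ whenever $x\notin A$ and $xy=0$ whenever $x\in A$; then any order-two subgroup $T$ of $A$ is a two-sided ideal with $R/T\cong Y$ (and the transposed argument yields $Z$). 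This forced-structure computation is the real content of the one-sided cases, and as submitted your argument omits it; either carry it out, or take the paper's shortcut through Lucchini--Mar\'oti, which hands you the factor ring for free.
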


Note that any ring with identity cannot be covered by proper ideals, since any ideal containing the identity must be the entire ring. This makes the study of ideal covering numbers meaningful only in the context of rings without identity. However, unlike their unital counterparts, the classification and structure of such rings are relatively less understood. To the best of our knowledge, this note presents the first systematic investigation on ideal covers and our constructions provide explicit infinite families illustrating asymmetries between left, right, and two-sided ideal covers. We hope that this represents a first step towards fully understanding the structure of ideal coverable rings and their ideal covering numbers.

We also give a remark explaining how our construction in Theorem \ref{main} attains the sharp lower bound for ideal covering numbers, making it of particular interest.

\begin{remark} \label{rem}
To study ideal covering numbers, it suffices to consider rings of prime power order. This follows with minor modification from \cite[Corollary 2.4]{ringfinite}, we briefly sketch the argument here for completeness. Suppose that $R=\bigcup_{i=1}^{\eta(R)} S_i$ is a minimal two-sided ideal cover. By \cite[Lemmas 4.1 \& 4.4]{neumann}, each $S_i^+$ has finite index in $R^+$, and by \cite[Lemma 1]{lewin}, their intersection contains a two-sided ideal $I$ of finite index in $R$, so $R/I$ is finite. Since the images of $S_i$ mod $I$ form an ideal cover of $R/I$, and conversely any ideal cover of $R/I$ can be lifted to an ideal cover of $R$, so $\eta(R)=\eta(R/I)$. It is well known that any finite ring is isomorphic to a direct product of rings of prime power order, so $R/I=\prod_{i=1}^k R_i$ where $|R_i|=p_i^{n_i}$ for distinct primes $p_i$. It follows that $R/I$ is ideal coverable if and only some $R_i$ is ideal coverable, and $\eta(R/I)=\underset{1 \leq i \leq k}{\min} \eta(R_i)$.

We note that analogous results also hold true for $\eta_\ell(R)$ and $\eta_r(R)$. Let $R$ be a ring of order $p^n$ for some prime $p$ and integer $n$. Since $R^+$ is a finite Abelian group and thus a direct product of cyclic groups, it is coverable if and only if it has a quotient isomorphic to $C_p \times C_p$ (or equivalently, $n \ge 2$ and $R^+$ non-cyclic), in which case $\sigma(R^+)=p+1$ and so our examples attain the sharp bound in \eqref{ineq}.
\end{remark}

The paper is structured as follows. In Section \ref{sec2} we will provide explicit constructions for rings $R_1,R_2,R_3,R_4$, calculate the ideal covering numbers for each case and show that they satisfies the respective conditions listed in Theorem \ref{main}, and as a result we deduce Corollary \ref{cor}. In Section \ref{sec3} we will give some numerical data for the ideal covering numbers of rings of small order, make a few observations and suggest some future research directions.

\section{Proof of Main Results} \label{sec2}

Let $R$ be a finite ring and suppose that $R^+$ have generators $g_1,\dots,g_k$ of orders $m_1,\dots,m_k$, so we have relations $m_i g_i=0$ for every $1 \leq i \leq k$. Then the ring structure is completely determined by the $k^2$ products $g_i g_j = \sum_{t=1}^k c_{ij}^t g_t$ with $0 \leq c_{ij}^t \leq m_t-1$ for every $1 \leq i,j,t \leq k$. Following the work of Fine \cite{p2} these relations are said to be the \emph{presentation} of $R$, and we write
\begin{eqnarray*} \textstyle
    R = \left\langle g_1, \dots, g_k : m_i g_i = 0, \  g_ig_j = \sum_{t=1}^k c_{ij}^t g_t \ \text { for every $1 \leq i,j,t \leq k$} \right\rangle.
\end{eqnarray*}

Let $p$ be a prime. Consider the following rings with the given presentation:
\begin{align*}
    R_1 &= \langle a,b : pa=pb=0,\ a^2=b^2=ab=ba=0 \rangle, \\
    R_2 &= \langle a,b : pa=pb=0,\ a^2=a,\ b^2=b,\ ab=b,\ ba=a \rangle, \\
    R_3 &= \langle a,b : pa=pb=0,\ a^2=a,\ b^2=b,\ ab=a,\ ba=b \rangle, \\
    R_4 &= \langle a,b,c : pa=pb=pc=0,\ a^2 = a,\ ba = b,\ ac = c, \\
    &\hspace{2em} b^2 = c^2 = ab = bc = ca = cb = 0 \rangle.
\end{align*}

We note that $R_1$, $R_2$, and $R_3$ each correspond to rings $J$, $F$, and $E$, respectively, in the classification of rings of order $p^2$ by Fine \cite{p2}; while $R_4$ corresponds to ring $(23)$ in the classification by Antipkin and Elizarov \cite{p3}. 

We first give a lemma that allows us that characterizes the ideal covering numbers of rings of order $p^2$, which we can then apply to rings $R_1,R_2,R_3$. \pagebreak

\begin{lemma} \label{lem}
Let $R$ be a ring of order $p^2$ with $R^+ = \langle a,b \rangle \cong C_p \times C_p$ for some prime $p$. Then $R^+$ has exactly $p+1$ non-trivial proper subgroups, each isomorphic to $C_p$, namely $I_0=\langle a \rangle$, $I_p=\langle b \rangle$, and $I_j= \langle a+jb \rangle $ for every $1 \leq j \leq p-1$. Furthermore, $R$ is left (resp. right, two-sided) ideal coverable if and only if $I_i$ is a left (resp. right, two-sided) ideal for every $0 \leq i \leq p$. In such cases, $R=\bigcup_{i=0}^p I_i$ is a minimal left (resp. right, two-sided) ideal cover.
\end{lemma}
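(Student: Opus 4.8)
The plan is to treat $R^+ \cong C_p \times C_p$ as a two-dimensional vector space over the field $\mathbb{F}_p$, so that the whole statement reduces to the elementary geometry of lines through the origin together with a single counting observation.

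First I would establish the enumeration of subgroups. Since $R^+$ has order $p^2$, every subgroup has order $1$, $p$, or $p^2$ by Lagrange, so the nontrivial proper subgroups are exactly those of order $p$; each such subgroup is cyclic, hence isomorphic to $C_p$, and is generated by any one of its $p-1$ nonzero elements. Counting nonzero vectors, $R^+$ has $p^2 - 1 = (p-1)(p+1)$ of them, and each order-$p$ subgroup accounts for exactly $p-1$ of them. Because two distinct subgroups of order $p$ meet only in $0$, the nonzero elements are partitioned among these subgroups, giving exactly $(p^2-1)/(p-1) = p+1$ of them. Writing down representatives, one checks that $\langle a \rangle$, $\langle b \rangle$, and $\langle a+jb \rangle$ for $1 \le j \le p-1$ are pairwise distinct and thus exhaust the list, yielding the claimed $I_0, I_p, I_1, \dots, I_{p-1}$.

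The crux of the argument is the partition observation just made: because the $p+1$ subgroups $I_0, \dots, I_p$ pairwise intersect trivially, their nonzero parts are disjoint, so $R \setminus \{0\} = \bigsqcup_{i=0}^p (I_i \setminus \{0\})$ and in particular $R = \bigcup_{i=0}^p I_i$. Consequently, omitting even a single $I_i$ from this union leaves its $p-1$ nonzero elements uncovered. Since any proper additive subgroup of $R$---and hence any proper left, right, or two-sided ideal---is either $\{0\}$ or one of the $I_i$, every ideal cover of $R$ must contain all $p+1$ of the $I_i$ as members.

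Combining these, I would conclude each equivalence as follows. If $R$ is left (resp.\ right, two-sided) ideal coverable, then by the previous paragraph each $I_i$ must appear in the cover and therefore be a left (resp.\ right, two-sided) ideal; conversely, if every $I_i$ is an ideal of the relevant type, then $R = \bigcup_{i=0}^p I_i$ is itself such a cover. Minimality is then immediate, again from the partition: no proper subcollection covers $R$, so this cover of size $p+1$ is minimal, indeed the unique minimal one. I do not anticipate a genuine obstacle here---the single point requiring care is the disjointness of the nonzero parts, which is precisely what forces every line to be used and thereby reduces coverability to the purely structural question of which of the $I_i$ happen to be ideals of the relevant type.
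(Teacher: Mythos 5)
Your proof is correct and follows essentially the same route as the paper: enumerate the $p+1$ subgroups of order $p$ by the same counting argument, note that every left, right, or two-sided ideal is in particular an additive subgroup, and thereby reduce coverability to the purely structural question of which $I_i$ are ideals. The only difference is that where the paper invokes the known covering number $\sigma(R^+)=p+1$ to conclude that every $I_i$ must appear in any cover, you derive this directly from the disjointness of the nonzero parts of the $I_i$ --- a self-contained variant of the same idea, since that partition already underlies the paper's count $(p^2-1)/(p-1)=p+1$.
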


\begin{proof}
Every non-trivial proper subgroup of $R^+$ must be isomorphic to $C_p$. There are exactly $p^2-1$ elements of order $p$ in $R^+$, and each cyclic group of order $p$ has $p-1$ generators, so there are $\frac{p^2-1}{p-1}=p+1$ such subgroups. 

We can check that $I_0,I_1,\dots,I_p$ as defined are pairwise distinct (their pairwise intersections are trivial). Since $\sigma(R^+)=p+1$, it follows that $R^+=\bigcup_{i=0}^p I_i$ is the unique minimal subgroup cover of $R^+$. Now any left (resp. right, two-sided) ideal is also a subgroup of $R^+$, so $R$ is left (resp. right, two-sided) ideal coverable if and if and only if each $I_i$ is a left (resp. right, two-sided) ideal.
\end{proof}

We are now ready to prove our main results. 

\begin{proof}[Proof of Theorem \ref{main}]
Since the multiplication of any two elements in $R_1$ is zero, it follows that every additive subgroup of $R^+$ is a two-sided (and hence, both a left and right) ideal. Thus, $\eta_\ell(R_1)=\eta_r(R_1)=\eta(R_1)=p+1$ by Lemma \ref{lem}. It can be verified that each $I_i$ in Lemma \ref{lem} is a left (resp. right) ideal of $R_2$ (resp. $R_3$), but $\langle a \rangle$ is not a right (resp. left) ideal of $R_2$ (resp. $R_3$). Therefore, $\eta_\ell(R_2)=\eta_r(R_3)=p+1$, whereas $\eta_r(R_2)=\eta(R_2)=\eta_\ell(R_3)=\eta(R_3)=\infty$.

A direct computation shows that both $J_\ell=\langle b \rangle$ and $J_r=\langle c \rangle$ are two-sided ideals of $R_4$. We claim that $R_4/J_\ell \cong R_2$. This can be shown by identifying $a+J_\ell$ and $(a+c)+J_\ell$ as the generators of $R_4/J_\ell$ and check that it has the same multiplicative structure as $R_2$. Therefore, $\eta_\ell(R_4) \leq \eta(R_4/J_\ell)=\eta(R_2)=p+1$ and so $\eta_\ell(R_4)=p+1$. Similarly, $R_4/J_r \cong R_3$ and so $\eta_r(R_4) = p+1$. Lastly, to show that $\eta(R_4)=\infty$, it suffices to show that any proper two-sided ideal $J$ of $R_4$ does not contain $a$: If $a \in J$, then $b=ba \in J$, $c=ac \in J$, so by closure, $xa+yb+zc \in J$ for every $0 \leq x,y,z \leq p-1$. It follows that $J=R_4$, a contradiction.
\end{proof}

\begin{proof}[Proof of Corollary \ref{cor}]
Let $\eta_*(R)$ denote any one of $\eta_\ell(R)$, $\eta_r(R)$, or $\eta(R)$. Suppose that $\eta_*(R)=3$. Then from \eqref{ineq}, we have $\sigma(R) \leq \eta_*(R) = 3$, and since $\sigma(R) \ge 3$, it follows that $\sigma(R)=3$. By \cite[Theorem 1.2]{ringclassic}, the possible candidates of $R$ are those with a factor ring isomorphic to one of the following: \begin{align*}
V&=\left\{\begin{pmatrix} 0 & 0 \\ 0 & 0 \end{pmatrix}, \begin{pmatrix} 1 & 0 \\ 0 & 0 \end{pmatrix}, \begin{pmatrix} 0 & 0 \\ 0 & 1 \end{pmatrix}, \begin{pmatrix} 1 & 0 \\ 0 & 1 \end{pmatrix} \right\} \leq \mathcal{M}_2(\mathbb{Z}_2), \\ 
W&=\left\{\begin{pmatrix} a & 0 & 0 \\ b & a & 0 \\ c & 0 & a \end{pmatrix} : a,b,c \in \mathbb{Z}_2 \right\}, \\
&\text{or one of $X,Y,Z.$}
\end{align*}

Note that $V$ and $W$ are rings with identity, so if $R$ has a factor ring isomorphic to either of them, then $R$ is also a ring with identity, and thus not ideal coverable. 

Now $X$ corresponds to $R_1$ for $p=2$; while $Y$ and $Z$ correspond to $R_2$ and $R_3$ for $p=2$, respectively, by making the following identification.
\begin{itemize}
\item For $Y \cong R_2$ : $a \leftrightarrow \begin{pmatrix} 1 & 0 \\ 0 & 0 \end{pmatrix}$ \ and \ $b \leftrightarrow \begin{pmatrix} 1 & 1 \\ 0 & 0 \end{pmatrix}$.
\item For $Z \cong R_3$ : $a \leftrightarrow \begin{pmatrix} 0 & 1 \\ 0 & 1 \end{pmatrix}$ \ and \ $b \leftrightarrow \begin{pmatrix} 0 & 0 \\ 0 & 1 \end{pmatrix}$.
\end{itemize}
In particular, we have $\eta_\ell(X)=\eta_r(X)=\eta(X)=\eta_\ell(Y)=\eta_r(Z)=3$, whereas $\eta_r(Y)=\eta(Y)=\eta_\ell(Z)=\eta(Z)=\infty$. Now if a ring $R$ has a factor ring $R/I$ such that $\eta_*(R/I)=3$, then $\eta_*(R) \leq \eta_*(R/I) = 3$, and since $\eta_*(R) \ge \sigma(R) \ge 3$, it follows that $\eta_*(R)=3$, as required. This completes the proof.
\end{proof}

\section{Concluding Remarks} \label{sec3}

We wrote a simple program in GAP \cite{gap} to determine the ideal covering numbers of rings of small order using the \texttt{SmallRing} library, which includes all rings of order less than 16 (up to isomorphism). As discussed in Remark \ref{rem}, it suffices to consider rings of order $p^n$ for some prime $p$ and integer $n \ge 2$. The only integers less than 16 of this form are 4, 8, and 9. For each ring, we used the  built-in command \texttt{Subrings} to enumerate all of its subrings, and we determine whether they are left, right, or two-sided ideals by multiplying its elements on both sides by every element of the ring. We then exhaustively searched for minimal ideal covers. The covering numbers for rings of order $4 = 2^2$ and $9 = 3^2$ (prime squares) are summarized in Table \ref{tb1}, while the case of order $8 = 2^3$ (a prime cube) is shown in Table \ref{tb2}.

\begin{table}[h!]
\caption{Subring and ideal covering numbers for each coverable rings $R=\texttt{SmallRing}(n,j)$ of orders four and nine.} \label{tb1}
\centering
\begin{minipage}{0.45\textwidth}
\centering
\begin{tabular}{|c|c|c|c|c|}
\hline
$(n,j)$ & $\sigma(R)$ & $\eta_{\ell}(R)$ & $\eta_r(R)$ & $\eta(R)$ \\
\hline
$(4,4)$ & 3 & 3 & 3 & 3 \\
$(4,7)$ & 3 & 3 & $\infty$ & $\infty$ \\
$(4,8)$ & 3 & $\infty$ & 3 & $\infty$ \\
$(4,10)$ & 3 & $\infty$ & $\infty$ & $\infty$ \\
\hline
\end{tabular}
\end{minipage}
\hfill
\begin{minipage}{0.45\textwidth}
\centering
\begin{tabular}{|c|c|c|c|c|}
\hline
$(n,j)$ & $\sigma(R)$ & $\eta_{\ell}(R)$ & $\eta_r(R)$ & $\eta(R)$ \\
\hline
$(9,4)$ & 4 & 4 & 4 & 4 \\
$(9,7)$ & 4 & 4 & $\infty$ & $\infty$ \\
$(9,8)$ & 4 & $\infty$ & 4 & $\infty$ \\
\hline
\end{tabular}
\end{minipage}
\end{table}
\begin{table}[h!] 
\caption{Subring and ideal covering numbers for each coverable rings $R=\texttt{SmallRing}(8,j)$ of order eight, with all indices $j$ having the same covering numbers grouped in the same row.} \label{tb2}
\centering
\renewcommand{\arraystretch}{1.2}
\begin{tabular}{|c|c|c|c|l|}
\hline
$\sigma(R)$ & $\eta_\ell(R)$ & $\eta_r(R)$ & $\eta(R)$ & List of indices $j$ \\
\hline
3 & 3 & 3 & 3 & 5, 6, 8--12, 16, 18, 19, 25--28, 31, 32, 39 \\ \hline
3 & $\infty$ & $\infty$ & $\infty$ & 14, 40, 45, 48--50 \\ \hline
3 & 3 & $\infty$ & $\infty$ & 20, 29, 30, 37, 41 \\ \hline
3 & $\infty$ & 3 & $\infty$ & 15, 34, 43, 44, 47 \\ \hline
3 & 3 & 3 & $\infty$ & 36 \\ \hline
\end{tabular}
\end{table}
Note that for $p=2$ and $p=3$, the rings $\texttt{SmallRing}(p^2, j)$ with indices $j = 4, 7, 8$ correspond to the rings $R_1,R_2,R_3$ constructed in Theorem \ref{main}, respectively; while $\texttt{SmallRing}(8, 36)$ corresponds to $R_4$ for $p=2$. 

Nicholas Werner (private communication) suggested a ring $R$ of $3 \times 3$ matrices of a particular form (motivated by \cite[Example 6.1]{ringfinite}) over $\mathbb{F}_q$, the field of $q=p^d$ elements, for every prime $p$ and integer $d \ge 1$, for which it appears that $\eta_\ell(R)=q+1$. Using a similar argument, one can construct a closely related ring $R'$ with $\eta_r(R')=q+1$. It remains unclear whether one might have $\eta(R)=q+1$. \pagebreak

We conclude with a few natural open questions.
\begin{enumerate} 
\item Are ideal covering numbers always of the form $p^d+1$? If so, what are the possible values of $d$, especially for two-sided ideals?
\item What are the exact conditions under which equality or strict inequality is attained in each successive inequalities in the chain \eqref{ineq}?
\item Does every ring with ideal covering number $p+1$ necessarily have a factor ring isomorphic to one of the examples constructed in Theorem \ref{main}?
\end{enumerate}

\section*{Acknowledgements}

The author thanks Eric Swartz for generous email correspondence that helped initiate this work, as well as Angelina Chin, Ta Sheng Tan, and Nicholas Werner for their helpful discussions and feedback.

The author is grateful for the \emph{Ring-Theoretic Aspects of Lie Theory} workshop at the University of Edinburgh for providing a reflective environment in which some of these investigations began.

\begin{bibdiv}
  \begin{biblist} 

      \bib{p3}{article}{
     author = {Antipkin, V. G.},
     author = {Elizarov, V. P.},
     title = {Rings of order $p^3$},
     year = {1983},
     journal = {Sib. Math. J.},
     volume = {23},
     pages = {457--464},
    }

        \bib{normal}{article}{
        author = {Bhargava, Mira},
        title = {When is a group the union of proper normal subgroups?},
        year = {2002},
        journal = {Amer. Math. Monthly},
        volume = {109},
        number = {5},
        pages = {471--473},
        }

        \bib{groupclassic}{article}{
        author = {Cohn, J. H. E.},
        title = {On $n$-sum groups},
        year = {1994},
        journal = {Math. Scand.},
        volume = {75},
        pages = {44--58},
        }

        \bib{semigroup}{article}{
         author = {Donoven, Casey R.},
         author = {Kappe, Luise-Charlotte},
         title = {Finite coverings of semigroups and related structures},
         year = {2023},
         journal = {Int. J. Group Theory},
         volume = {12},
         number = {3},
         pages = {205--222},
        }

        \bib{p2}{article}{
         author = {Fine, Benjamin},
         title = {Classification of finite rings of order $p^2$},
         year = {1993},
         journal = {Math. Mag.},
         volume = {66},
         number = {4},
         pages = {248--252},
        }

        \bib{loop}{article}{
         author = {Gagola III, Stephen M.},
         author = {Kappe, Luise-Charlotte},
         title = {On the covering number of loops},
         year = {2016},
         journal = {Expo. Math.},
         volume = {34},
         number = {4},
         pages = {436--447},
}
        
        \bib{grouprecent}{article}{
        author = {Garonzi, Martino},
        author = {Kappe, Luise-Charlotte},
        author = {Swartz, Eric},
        title = {On integers that are covering numbers of groups},
        year = {2022},
        journal = {Exp. Math.},
        volume = {31},
        number = {2},
        pages = {425--443},
        }

        \bib{lewin}{article}{
         author = {Lewin, Jacques},
         title = {Subrings of finite index in finitely generated rings},
         year = {1967},
         journal = {J. Algebra},
         volume = {5},
         pages = {84--88},
        }

        \bib{ringclassic}{article}{
        author = {Lucchini, Andrea},
        author = {Mar{\'o}ti, Attila},
        title = {Rings as the unions of proper subrings},
        year = {2012},
        journal = {Algebr. Represent. Theory},
        volume = {15},
        number = {6},
        pages = {1035--1047},
        }
        
        \bib{survey}{article}{
	   title = {Finite coverings: a journey through groups, loops, rings and semigroups. \emph{Group theory, combinatorics, and computing}},
	   volume = {611},
	   journal = {Contemp. Math.},
	   author = {Kappe, Luise-Charlotte},
	   year = {2014},
	   pages = {79--88},
	   publisher = {Amer. Math. Soc.}
        }

        \bib{vectorspace}{article}{
        author = {Khare, Apoorva},
        title = {Vector spaces as unions of proper subspaces},
        year = {2009},
        journal = {Linear Algebra Appl.},
        volume = {431},
        number = {9},
        pages = {1681--1686},
        }

        \bib{module}{article}{
         author = {Khare, Apoorva},
         author = {Tikaradze, Akaki},
         title = {Covering modules by proper submodules},
         year = {2022},
         journal = {Comm. Algebra},
         volume = {50},
         number = {2},
         pages = {498--507},
        }

        \bib{neumann}{article}{
         author = {Neumann, B. H.},
         title = {Groups covered by permutable subsets},
         year = {1954},
         journal = {J. Lond. Math. Soc.},
         volume = {29},
         pages = {236--248},
        }

        \bib{gap}{webpage}{
    author       = {{The GAP~Group}},
    title        = {{GAP} -- {G}roups, {A}lgorithms, and {P}rogramming,
                    {V}ersion 4.14.0},
    year         = {2024},
    url = {https://www.gap-system.org}
}

        \bib{groupclassic2}{article}{
        author = {Tomkinson, M. J.},
        title = {Groups as the union of proper subgroups},
        year = {1997},
        journal = {Math. Scand.},
        volume = {81},
        pages = {191--198},
        }

        \bib{scorza}{article}{
        author = {Scorza, Gaetano},
        title = {I gruppi che possono pensarsi come somma di tre loro sottogruppi \emph{[Groups that can be thought of as the sum of three of their subgroups]}},
        year = {1926},
        journal = {Boll. Unione Mat. Ital.},
        volume = {5},
        pages = {216--218},
        }

        \bib{ringrecent}{article}{
         author = {Swartz, Eric},
         author = {Werner, Nicholas J.},
         title = {The covering numbers of rings},
         year = {2024},
         journal = {J. Algebra},
         volume = {639},
         pages = {249--280},
        }

        \bib{ringfinite}{article}{
         author = {Werner, Nicholas J.},
         title = {Covering numbers of finite rings},
         year = {2015},
         journal = {Amer. Math. Monthly},
         volume = {122},
         number = {6},
         pages = {552--566},
        }
        
    \end{biblist}
\end{bibdiv}

\end{document}